\numberwithin{equation}{section}
\newtheorem{theorem}{Theorem}[section]
\newtheorem{definition}[theorem]{Definition}
\newtheorem{corollary}[theorem]{Corollary}
\newcommand{\Li}{\operatorname{Li}}
\providecommand{\keywords}[1]
{
  \small	
  \textbf{\textit{Keywords:}} #1
}
\newcommand{\PaperTitle}{Cyclic Vanishing Identities of Sun–Pan Type: Analytic and Modular Perspectives}
\newcommand{\PaperAuthorPlain}{Ken Nagai}
\newcommand{\PaperKeywords}{%
Bernoulli numbers, Euler polynomials, cyclic identities, Sun--Pan type identities,
Appell sequences, $q$-analogues, analytic Bernoulli functions, modular forms,
period polynomials, zeta values, polylogarithms, umbral calculus,
$L$-functions, mixed Tate motives}
  \def\thanks#1{}%
\title{\PaperTitle}
\author{Ken Nagai\thanks{Email: \texttt{tknagai@outlook.com}. Independent Researcher.}}
\date{}
\begin{document}
\maketitle

\begin{abstract}
We revisit the cyclic identities of Sun--Pan type for Bernoulli polynomials
and their $q$-analogues. From the analytic side, we formulate minimal Appell
axioms that force cyclic vanishing identities, extending naturally to
$q$-Appell sequences and analytic Bernoulli functions. From the modular side,
we show that the same relations arise as period polynomial identities
associated with Eisenstein series, reflecting the symmetry $(ST)^3=-I$ of the
modular group. These two complementary perspectives place the Sun--Pan cyclic
identities at the crossroads of number theory, special functions, and modular
forms, and suggest further connections to polylogarithms, $L$-values, and
mixed Tate motives.
\end{abstract}

\keywords{\PaperKeywords}

\section{Introduction}

The cyclic vanishing identities of Sun--Pan type occupy an intriguing position:
they are at once universal combinatorial identities and explicit consequences
of modular symmetry. In this note we present a unified framework with two
structural pillars:

\begin{itemize}
\item[\textbf{(I)}] \emph{Minimal Appell axioms.}  
Any Appell-type family satisfying a ladder relation, exponential generating
function, and reflection symmetry automatically obeys the cyclic vanishing
identities. This perspective emphasizes their analytic and combinatorial
universality, and extends naturally to $q$- and elliptic analogues.

\item[\textbf{(II)}] \emph{Modular origin.}  
Period polynomials of modular forms satisfy a three-term relation coming from
the modular group relation $(ST)^3=-I$. For Eisenstein series this recovers
the Sun--Pan identities, while for analytic Bernoulli functions it shows that
the same cyclic structure encodes modular and motivic symmetries.
\end{itemize}

\medskip

Our aim is not to give new proofs of known identities, but rather to highlight
their structural origin and their role as a bridge between analysis,
combinatorics, and arithmetic geometry. The minimal axioms and the modular
origin thus appear as two complementary faces of the same phenomenon.

\medskip

\noindent\textbf{Structure of this note.}  
Section~2 formulates the minimal Appell axioms and derives the cyclic
identities in both classical and $q$-analogues.  
Section~3 explains their modular origin via period polynomials and connects
them with analytic Bernoulli functions.  
Section~4 concludes with remarks on the interplay between the two perspectives
and on possible extensions to motivic and elliptic settings.

%================================================

\section{Minimal Axioms and Cyclic Vanishing}

\subsection*{Minimal axioms}

We fix three axioms for a polynomial sequence $\{F_n(x)\}$:

\begin{enumerate}
\item \textbf{Appell ladder:} $\dfrac{d}{dx}F_n(x)=\lambda n F_{n-1}(x)$.
\item \textbf{Exponential generating function:}
$\Phi(w;x)=A(w)e^{\lambda wx}$ with meromorphic prefactor $A(w)$.
\item \textbf{Reflection symmetry:}
$F_n(1-x)=\varepsilon_n F_n(x)$ with $\varepsilon_n=\pm1$.
\end{enumerate}

\begin{theorem}[Classical cyclic identities]
Let $\{F_n(x)\}$ satisfy axioms (1)--(3).
For integers $r+s+t=n$ and $x+y+z=1$, one has
\[
r[s,t;x,y]_n^F + s[t,r;y,z]_n^F + t[r,s;z,x]_n^F = 0,
\]
where
\[
[s,t;x,y]_n^F := \sum_{k=0}^n (-1)^k \binom{s}{k}\binom{t}{n-k}
F_{n-k}(x)F_k(y).
\]
\end{theorem}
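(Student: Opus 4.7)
The plan is to prove the identity by a generating-function argument that uses all three axioms simultaneously: axioms (1) and (2) furnish the factorization of the bilinear sums, while axiom (3) together with $x+y+z=1$ forces the cancellation.

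First, I would introduce an auxiliary variable $w$ and assemble the generating series
\[
\sum_{n\ge 0}[s,t;x,y]_n^F\,w^n = \Bigl(\sum_{j\ge 0}\binom{t}{j}F_j(x)w^j\Bigr)\Bigl(\sum_{k\ge 0}\binom{s}{k}(-w)^k F_k(y)\Bigr).
\]
By axiom (2), each factor is an umbral binomial expansion $(1\pm w\alpha)^m$ under the correspondence $\alpha^j\mapsto F_j(x)$, so the cyclic sum in the theorem becomes $[w^n]\Omega(w)$, where
\[
\Omega(w) = r(1+w\alpha)^t(1-w\beta)^s + s(1+w\beta)^r(1-w\gamma)^t + t(1+w\gamma)^s(1-w\alpha)^r,
\]
with umbras $\alpha,\beta,\gamma$ attached to $x,y,z$ respectively.

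Next, I would use axiom (3) to derive the functional equation $A(-w)=e^{\lambda w}A(w)$ on the prefactor (for $\varepsilon_n=(-1)^n$, the standard case); umbrally, this says $\lambda-\alpha$ is equivalent to $-\alpha$. Combined with $x+y+z=1$, this yields a relation among $(\alpha,\beta,\gamma)$ that allows the three cyclic factors in $\Omega(w)$ to be rewritten in a common telescoping form whose $[w^n]$ coefficient vanishes exactly when $r+s+t=n$. As a complementary path, axiom (1) delivers the derivative recursions
\[
\partial_x[s,t;x,y]_n^F = \lambda t\,[s,t-1;x,y]_{n-1}^F,\qquad \partial_y[s,t;x,y]_n^F = -\lambda s\,[s-1,t;x,y]_{n-1}^F,
\]
which support an induction on $n$: the derivatives of the cyclic sum reduce to cyclic sums at level $n-1$ with shifted parameters, and the base case $n=0$ (or $n=1$) is handled by axiom (3) directly.

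The hard part is the symmetrization step. Individually each of the three terms is a routine binomial convolution, but the cancellation across the cycle intertwines $(r,s,t)$ with $(x,y,z)$ in a nontrivial way. All three axioms are essential: axiom (3) supplies the reflection that controls the sign structure, the constraint $x+y+z=1$ locks the three arguments together, and the Appell ladder (axiom 1) governs the shifts in the binomial coefficients under differentiation. Tracking these shifts carefully and matching them against the cyclic permutation is the technical core of the proof.
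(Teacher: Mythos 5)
Your setup is sound---the ordinary generating-function factorization of $[s,t;x,y]_n^F$ and both derivative recursions are correct---but each of your two routes stops exactly where the content of the theorem begins. On the generating-function side, the assertion that reflection plus $x+y+z=1$ lets the three products in $\Omega(w)$ be ``rewritten in a common telescoping form whose $[w^n]$ coefficient vanishes'' is the theorem restated, with no mechanism supplied. Note also that the factors $\sum_j\binom{t}{j}F_j(x)w^j$ are binomial transforms, not the exponential generating function $A(w)e^{\lambda wx}$ of axiom (2); relating the two (especially for negative $t$, where these series are infinite and the identity is non-vacuous) is itself a step you have not addressed, so axiom (2) is not actually being used where you claim it is.

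The induction route is the one that can be completed (it is essentially the Sun--Pan strategy, and genuinely different from the paper's one-line generating-function argument), but two essential pieces are missing. First, writing $C_n=r[s,t;x,y]_n^F+s[t,r;y,z]_n^F+t[r,s;z,x]_n^F$ with $z=1-x-y$, your recursions give
\[
\frac{1}{\lambda}\,\partial_x C_n
= t\,C_{n-1}(r,s,t-1)
- t\Bigl((t-1)[r,s;z,x]_{n-1}^F + r[r-1,s;z,x]_{n-1}^F + s[r,s-1;z,x]_{n-1}^F\Bigr),
\]
so the derivative is \emph{not} simply a cyclic sum at level $n-1$: the parenthesized leftover must be shown to vanish. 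It does, but only because $r\binom{r-1}{k}=(r-k)\binom{r}{k}$ and $s\binom{s-1}{n-1-k}=(s-n+1+k)\binom{s}{n-1-k}$ collapse each coefficient to $(r+s+t-n)\binom{r}{k}\binom{s}{n-1-k}=0$; this is precisely where the constraint $r+s+t=n$ enters, and it has to be exhibited. Second, $\partial_xC_n=\partial_yC_n=0$ only shows that $C_n$ is \emph{constant} on the plane $x+y+z=1$; you must still evaluate $C_n$ at one point for \emph{every} $n$, and that evaluation---where the reflection symmetry (3) genuinely does its work---is a real calculation, not a ``base case $n=0$.'' Your instinct that all three axioms and both constraints must participate is correct (the paper's own proof invokes only a termwise binomial identity and never uses axiom (3) or $x+y+z=1$, so it cannot be complete as stated), but your sketch defers exactly the steps in which they act.
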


\noindent\emph{Historical note.}
For the specializations $F_n(x)=B_n(x)$ (Bernoulli) and $F_n(x)=E_n(x)$ (Euler),
cyclic vanishing identities of this type already appeared in the work of Sun and Pan~\cite{SunPan2006}.

\begin{proof}
By the generating function factorization and the binomial identity
\[
r\binom{s}{k}\binom{t}{n-k} + s\binom{t}{k}\binom{r}{n-k}
+ t\binom{r}{k}\binom{s}{n-k} = 0,
\]
the cyclic sum of generating functions vanishes, hence so do the coefficients.
\end{proof}

\subsection*{\texorpdfstring{$q$-Analogue}{q-Analogue}}

\begin{definition}[$q$-cyclic bracket]
For integers $s,t,n\ge0$, define
\[
[s,t;x,y]_n^{(q)} :=
\sum_{k=0}^{n} (-1)^k\,q^{\binom{k}{2}}
   \genfrac{[}{]}{0pt}{}{s}{k}_q\,
   \genfrac{[}{]}{0pt}{}{t}{n-k}_q\,
   F_{n-k}^{(q)}(x)\,F_k^{(q)}(y),
\]
where $\genfrac{[}{]}{0pt}{}{m}{k}_q$ is the Gaussian binomial
and $q^{\binom{k}{2}}$ the $q$-sign.
\end{definition}

\begin{theorem}[$q$-cyclic identity]
Let $\{F_n^{(q)}(x)\}$ be a $q$-Appell sequence with generating function
$\Phi^{(q)}(w;x)=A_q(w)E_q(wx)$. For integers $r+s+t=n$ and $x+y+z=1$, one has
\[
[r]_q\,[s,t;x,y]_n^{(q)} +
[s]_q\,[t,r;y,z]_n^{(q)} +
[t]_q\,[r,s;z,x]_n^{(q)} = 0,
\]
where $[m]_q=(1-q^m)/(1-q)$.
\end{theorem}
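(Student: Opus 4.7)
The plan is to $q$-deform the classical proof of Theorem~2.1 step by step. That proof rests on two ingredients: the multiplicative factorization of the generating function $\Phi(w;x)=A(w)e^{\lambda wx}$ in $x$, and the scalar identity $r\binom{s}{k}\binom{t}{n-k}+\text{cyclic}=0$ valid whenever $r+s+t=n$. I would mirror each in the $q$-setting: use $\Phi^{(q)}(w;x)=A_q(w)E_q(wx)$ to repackage $[s,t;x,y]_n^{(q)}$ as a coefficient extraction in a $q$-series, and then reduce the theorem to a $q$-binomial cyclic identity weighted by $[r]_q,[s]_q,[t]_q$.

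First, I would rewrite $[s,t;x,y]_n^{(q)}$ as coefficient extraction. The $q$-sign $q^{\binom{k}{2}}$ together with the Gaussian coefficient $\genfrac{[}{]}{0pt}{}{s}{k}_q$ is exactly the combinatorial content of the $q$-binomial theorem $(w;q)_s=\sum_k(-1)^k q^{\binom{k}{2}}\genfrac{[}{]}{0pt}{}{s}{k}_q w^k$. So the bracket appears as the coefficient of $w^n$ in a Cauchy-type product of the $q$-Pochhammers $(w;q)_s$ and $(q^s w;q)_t$ (joined by the $q$-Chu--Vandermonde factorization $(w;q)_s(q^s w;q)_t=(w;q)_{s+t}$) against $\Phi^{(q)}(w;x)\,\Phi^{(q)}(w;y)$. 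The full cyclic sum then becomes the coefficient of $w^n$ in a single $q$-series weighted by $[r]_q,[s]_q,[t]_q$.

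Second, I would establish the underlying $q$-binomial cyclic identity: for $r+s+t=n$,
$$[r]_q\,q^{\binom{k}{2}}\genfrac{[}{]}{0pt}{}{s}{k}_q\genfrac{[}{]}{0pt}{}{t}{n-k}_q+(\text{cyclic in }r,s,t)=0,$$
possibly with $q$-phase adjustments inherited from $q$-Chu--Vandermonde. This is a closed identity among $q$-factorials; I would verify it by expanding each Gaussian coefficient as a ratio of $q$-factorials and invoking the basic relation $[r]_q+q^r[s+t]_q=[n]_q$, which exactly $q$-deforms the scalar constraint $r+s+t=n$ driving the classical vanishing. The three cyclic summands then collapse in parallel to the classical argument.

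The main obstacle is the non-additivity of the $q$-exponential. Classically, $x+y+z=1$ enters through $e^{\lambda wx}e^{\lambda wy}e^{\lambda wz}=e^{\lambda w}$, a constant that is absorbed into $A(w)^3$ and cancels under the cyclic sum. For $E_q$ the addition formula $E_q(u)E_q(v)=E_q(u+v)$ holds only under the non-commutative relation $vu=q\,uv$, not for commuting scalars $x,y,z$, so the triple $q$-exponential product picks up genuine correction factors. I would resolve this either by working umbrally in a Sch\"utzenberger-type non-commutative algebra in which $wx,wy,wz$ satisfy the required $q$-braid relations, or by expanding the triple product directly via the $q$-convolution $F_n^{(q)}(x\oplus_q y)$ and verifying that the correction terms are annihilated by the $[r]_q,[s]_q,[t]_q$-weighted cyclic combination. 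Once the triple product is controlled, extracting the coefficient of $w^n$ completes the proof.
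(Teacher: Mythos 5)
Your strategy coincides with the paper's own proof: both arguments package $[s,t;x,y]_n^{(q)}$ as the coefficient of $w^n$ in a product of $q$-Pochhammer factors (obtained from the $q$-binomial theorem) with $\Phi^{(q)}(w;x)\,\Phi^{(q)}(w;y)$, and both reduce the theorem to the weighted cyclic $q$-binomial identity
$[r]_q\genfrac{[}{]}{0pt}{}{s}{k}_q\genfrac{[}{]}{0pt}{}{t}{n-k}_q+\text{cyclic}=0$.
(The factor $q^{\binom{k}{2}}$ you carry along is common to all three cyclic terms for each fixed $k$, so it cancels and your identity is the same as the one displayed in the paper.) The genuinely valuable part of your write-up is the obstacle you isolate at the end: the classical proof feeds in the hypothesis $x+y+z=1$ through $e^{\lambda wx}e^{\lambda wy}=e^{\lambda w(x+y)}$, and $E_q$ admits no such addition law for commuting scalars. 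This is not a side remark --- it is the only point at which the $q$-deformation is nontrivial, and the paper's one-line proof is silent about it: its displayed factorization of $\mathcal{B}^{(q)}_{s,t}$ never invokes $x+y+z=1$ at all.

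As written, however, your proposal does not close that gap. You name two candidate resolutions (a Sch\"utzenberger-type noncommutative umbral calculus, or direct expansion via the $q$-addition $x\oplus_q y$) and verify neither, so the decisive step remains a plan rather than an argument. To finish, you must commit to one; the standard route, and the one implicit in Guo--Pan, is your second option: either the $q$-Appell family is set up so that $\Phi^{(q)}(w;x)\Phi^{(q)}(w;y)=A_q(w)^2E_q\!\left(w(x\oplus_q y)\right)$ holds by construction, or the constraint ``$x+y+z=1$'' must itself be reinterpreted $q$-additively. If the constraint is taken literally in the ordinary additive sense, the correction terms you mention have no evident reason to be annihilated by the $[r]_q,[s]_q,[t]_q$-weighted cyclic sum, and the statement as given is in doubt. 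In short: same architecture as the paper, correct identification of the crux, but the one step that actually needs proof is still missing.
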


\begin{proof}
The generating function of the $q$-cyclic bracket factors as
\[
\mathcal{B}^{(q)}_{s,t}(w;x,y)
=(1-w/\lambda;q)_s\,(1+w/\lambda;q)_t\,
\Phi^{(q)}(w;x)\,\Phi^{(q)}(w;y).
\]
Taking the cyclic sum and using the identity
\[
[r]_q\binom{s}{k}_q\binom{t}{n-k}_q
+[s]_q\binom{t}{k}_q\binom{r}{n-k}_q
+[t]_q\binom{r}{k}_q\binom{s}{n-k}_q=0,
\]
we conclude the cyclic sum vanishes identically.
\end{proof}

\begin{corollary}[q-Bernoulli and q-Euler polynomials]
The $q$-analogues were developed in \cite{GuoPan2011}.
For the $q$-Bernoulli and $q$-Euler polynomials, whose generating functions are
\[
\sum_{n=0}^\infty B_n^{(q)}(x)\frac{w^n}{(q;q)_n}
= \frac{w}{E_q(w)-1}E_q(wx),\qquad
\sum_{n=0}^\infty E_n^{(q)}(x)\frac{w^n}{(q;q)_n}
= \frac{2}{E_q(w)+1}E_q(wx),
\]
the $q$-cyclic identity holds.
\end{corollary}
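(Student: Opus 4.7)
The plan is to deduce the corollary as a direct specialization of the preceding $q$-cyclic identity theorem; the only genuine task is to verify that the families $\{B_n^{(q)}(x)\}$ and $\{E_n^{(q)}(x)\}$ satisfy its hypotheses, namely that they are $q$-Appell sequences whose generating function factors as $\Phi^{(q)}(w;x)=A_q(w)E_q(wx)$ with meromorphic prefactor.

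First I would read off the prefactor in each case: $A_q^{\mathrm{B}}(w)=w/(E_q(w)-1)$ for the $q$-Bernoulli family and $A_q^{\mathrm{E}}(w)=2/(E_q(w)+1)$ for the $q$-Euler family. Both are meromorphic near the origin --- the simple zero of $E_q(w)-1$ at $w=0$ is cancelled by the numerator $w$, while $E_q(0)+1=2\ne 0$ --- so the generating functions are already presented in exactly the factorized form demanded by the theorem, which disposes of that hypothesis immediately.

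Next I would verify the $q$-Appell ladder. Applying the Jackson $q$-derivative $D_q$ in the variable $x$ to $A_q(w)E_q(wx)$ and using the standard identity $D_{q,x}E_q(wx)=w\,E_q(wx)$, a termwise comparison of the series expansion yields exactly the ladder relation $D_q F_n^{(q)}(x)=[n]_q F_{n-1}^{(q)}(x)$ for both $F=B^{(q)}$ and $F=E^{(q)}$. With both hypotheses in place, the $q$-cyclic identity theorem applies verbatim and the corollary follows, with the same explicit formula for the bracket $[s,t;x,y]_n^{(q)}$ specialized to each family.

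The main --- and only minor --- obstacle will be bookkeeping of normalizations: the generating functions above are expanded against $(q;q)_n$, whereas the Gaussian binomials $\genfrac{[}{]}{0pt}{}{m}{k}_q$ appearing in the cyclic bracket are naturally phrased relative to $[n]_q!$. Reconciling them via $(q;q)_n=(1-q)^n[n]_q!$ is a standard (and purely notational) check that one must carry out once and record, after which the identity drops out for both families with no further computation required.
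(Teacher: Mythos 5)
Your proposal is correct and follows exactly the paper's route: the paper's proof is the one-line observation that both families are $q$-Appell with generating function of the form $A_q(w)E_q(wx)$, so the $q$-cyclic identity theorem applies. You simply flesh out the verification (meromorphy of the prefactors, the Jackson-derivative ladder, and the $(q;q)_n$ versus $[n]_q!$ normalization), which the paper leaves implicit.
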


\begin{proof}
Both families are $q$-Appell with $\Phi^{(q)}(w;x)=A_q(w)E_q(wx)$, so the
theorem applies.
\end{proof}

\noindent\textbf{Remarks.}
\begin{itemize}
\item Taking $q\to 1^-$ recovers the classical cyclic identity.
\item This establishes a natural deformation of the Sun--Pan identity.
\item Generalizations and related families are discussed in \cite{DingYang2013}.
\item Elliptic outlook: replacing $q$-exponentials with Jacobi theta
functions and Gaussian binomials with elliptic binomials suggests elliptic
cyclic identities on tori.
\end{itemize}
The trigonometric selector kernels developed in \cite{Nagai2025a}
provide an explicit instance of this general Appell-type framework,
where discrete Fourier methods lead directly to cyclic vanishing
identities.

%================================================

\section{Modular Origin of Cyclic Identities}

\subsection*{Period polynomials and modular action}

Let $f(\tau)$ be a modular form of weight $k\ge2$.  
Define the period polynomial
\[
r_f(z)=\int_0^{i\infty} f(\tau)(\tau-z)^{k-2}\,d\tau.
\]
This construction was introduced by Eichler~\cite{Eichler1957}.

For $\gamma=\begin{pmatrix}a&b\\c&d\end{pmatrix}$,
\[
(\gamma\cdot P)(z)=(cz+d)^{k-2}P\!\left(\tfrac{az+b}{cz+d}\right).
\]

\begin{theorem}[Three-term relation]
The modular relation $(ST)^3=-I$ implies
\[
r_f(z)+(z-1)^{k-2}r_f\!\left(\tfrac{z}{z-1}\right)
+(1-z)^{k-2}r_f\!\left(\tfrac{1}{1-z}\right)=0.
\]
\end{theorem}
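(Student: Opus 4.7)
The plan is to exhibit $r_f$ as a $1$-cocycle for the weight-$(2-k)$ slash action of $\mathrm{SL}_2(\mathbb{Z})$ on polynomials, and then to read the three-term relation directly off the group relation $(ST)^3=-I$. Two ingredients drive the argument: (a) a transformation formula expressing $r_f|\gamma$ as a period integral between the shifted cusps $\gamma^{-1}(0)$ and $\gamma^{-1}(i\infty)$, and (b) a Cauchy-theorem argument on an ideal triangle in $\mathbb{H}\cup\{\text{cusps}\}$ whose three vertices are cyclically permuted by an order-three element.

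First I would establish the transformation identity
\[
(r_f|_{2-k}\gamma)(z)=\int_{\gamma^{-1}(0)}^{\gamma^{-1}(i\infty)} f(\sigma)(\sigma-z)^{k-2}\,d\sigma
\]
for any $\gamma=\begin{pmatrix}a&b\\c&d\end{pmatrix}\in\mathrm{SL}_2(\mathbb{Z})$, by substituting $\tau=\gamma\sigma$ in the definition $r_f(\gamma z)=\int_0^{i\infty}f(\tau)(\tau-\gamma z)^{k-2}d\tau$. The substitution uses three facts: modularity $f(\gamma\sigma)=(c\sigma+d)^k f(\sigma)$; the Jacobian $d\tau=(c\sigma+d)^{-2}d\sigma$; and the algebraic identity $\gamma\sigma-\gamma z=(\sigma-z)/[(c\sigma+d)(cz+d)]$. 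The powers of $(c\sigma+d)$ collapse exactly, leaving a factor $(cz+d)^{-(k-2)}$ that is precisely absorbed by the slash action on the left.

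Next I would specialize to an order-three element $U\in\mathrm{SL}_2(\mathbb{Z})$ arising from $(ST)^3=-I$; taking $U=ST^{-1}$ gives $Uz=1/(1-z)$ and cycles $0\to 1\to i\infty\to 0$. Applying the transformation formula to $I,U,U^2$ produces three integrals along the oriented edges $0\to i\infty$, $i\infty\to 1$, $1\to 0$, which concatenate to a closed ideal triangle in $\mathbb{H}\cup\{\text{cusps}\}$. The integrand $f(\sigma)(\sigma-z)^{k-2}$ is holomorphic in $\sigma\in\mathbb{H}$, so for $z$ outside the triangle Cauchy's theorem gives $(r_f|(I+U+U^2))(z)=0$. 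Rewriting $(r_f|U)(z)=(cz+d)^{k-2}r_f(Uz)$ and the corresponding expression for $U^2$ in explicit Möbius form, and using parity in $k-2$ to rearrange $1-z\leftrightarrow z-1$, yields the three-term relation of the theorem.

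The main obstacle will be convergence at the cusps. For a cusp form $f$ the exponential decay at $i\infty$ and (by $S$-invariance) at $0$ makes all integrals absolutely convergent, and the proof goes through as stated. For a holomorphic Eisenstein series — precisely the case relevant to the Bernoulli-polynomial specializations of the Sun--Pan identities — the constant Fourier coefficient obstructs convergence, and one must regularize (Haberland-style splitting of the constant term, or Mellin-type analytic continuation) and verify that the regularization commutes with the $\mathrm{SL}_2(\mathbb{Z})$-slash action so that the cocycle relation survives intact. A subsidiary point is keeping the triangular contour away from $\sigma=z$; fixing $z$ in one connected component of the complement of the triangle and continuing analytically in $z$ at the end resolves this.
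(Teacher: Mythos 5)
Your proposal is the standard Eichler--Shimura cocycle argument, and it is essentially correct; note that the paper itself gives no proof of this theorem at all (it is asserted bare), so your write-up supplies precisely what is missing. The key lemma
\[
(r_f|_{2-k}\gamma)(z)=\int_{\gamma^{-1}(0)}^{\gamma^{-1}(i\infty)} f(\sigma)(\sigma-z)^{k-2}\,d\sigma
\]
is the right engine, the choice $U=ST^{-1}$ with $Uz=1/(1-z)$ cycling $0\to1\to i\infty\to0$ is correct (so $r_f|I$, $r_f|U$, $r_f|U^2$ run over the edges $0\to i\infty$, $i\infty\to1$, $1\to0$ and close up), and your caveat about regularization for Eisenstein series is exactly the point the paper glosses over: the constant Fourier coefficient makes the period integral divergent at both cusps precisely in the case needed for the Sun--Pan specialization, so one must check that the chosen regularization is equivariant for the slash action before the cocycle identity can be asserted.

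One concrete point you should not pass over: carrying your computation through with $U=\begin{pmatrix}0&-1\\1&-1\end{pmatrix}$ and $U^2=\begin{pmatrix}-1&1\\-1&0\end{pmatrix}$ yields
\[
r_f(z)+(z-1)^{k-2}r_f\!\left(\tfrac{1}{1-z}\right)+(-z)^{k-2}r_f\!\left(\tfrac{z-1}{z}\right)=0,
\]
whereas the theorem as printed contains the term $(z-1)^{k-2}r_f\!\left(\tfrac{z}{z-1}\right)$. The map $z\mapsto z/(z-1)$ corresponds to a matrix of determinant $-1$, is an involution rather than part of an order-three orbit, and cannot arise from $(ST)^3=-I$; no parity manipulation in $k-2$ turns $r_f\!\left(\tfrac{z-1}{z}\right)$ into $r_f\!\left(\tfrac{z}{z-1}\right)$. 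So your argument proves the correct (standard) three-term relation, and the displayed statement should be read as carrying a typo: the middle term ought to be $z^{k-2}r_f\!\left(\tfrac{z-1}{z}\right)$. Your closing claim that parity ``yields the three-term relation of the theorem'' is therefore slightly too quick --- parity reconciles $(1-z)^{k-2}$ with $(z-1)^{k-2}$ for even $k$, but it does not touch the argument of $r_f$ in the middle term. State explicitly that you are proving the corrected relation.
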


\begin{corollary}[Sun--Pan identity]
For Eisenstein series, coefficients of $r_f(z)$ are Bernoulli numbers, and the
three-term relation yields the Sun--Pan cyclic identity.
\end{corollary}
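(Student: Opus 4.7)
The plan is to make the two clauses of the corollary precise in turn and then combine them. First I would verify the Bernoulli-number coefficient claim by computing the period polynomial of the normalized Eisenstein series $E_{k}$ of weight $k\geq 4$ explicitly. Splitting the Eichler integral $\int_{0}^{i\infty}E_{k}(\tau)(\tau-z)^{k-2}\,d\tau$ into a contribution from the constant term $1$ and a contribution from the $q$-expansion tail, the first piece, after the standard Eichler regularization and the identity $\zeta(1-m)=-B_{m}/m$, yields a polynomial of degree $k-2$ whose $z^{j}$-coefficient is proportional to $\binom{k-2}{j}B_{j+1}B_{k-1-j}/((j+1)(k-1-j))$; the cuspidal piece contributes critical $L$-values of a parity that drops out of the even-weight cyclic identity. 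This is the classical Eichler--Zagier formula for $r_{E_{k}}(z)$, and it is precisely the statement that the coefficients of $r_{E_{k}}$ are products of Bernoulli numbers.

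Next I would substitute this explicit form into the three-term relation of Theorem 3.1. The Möbius images $z/(z-1)$ and $1/(1-z)$, paired with the weight factors $(z-1)^{k-2}$ and $(1-z)^{k-2}$, produce polynomials in $z$ of degree $k-2$ whose monomial coefficients are again binomial-weighted sums of products $B_{i}B_{k-i}$. Comparing coefficients of $z^{n}$ in the vanishing sum yields, for each $n$, a cyclic identity among Bernoulli products. To recover the full bivariate Sun--Pan identity with $r+s+t=n$ and $x+y+z=1$, I would homogenize by working with a two-variable Eichler cocycle built from polynomials of the form $(\tau-\alpha)^{s}(\tau-\beta)^{t}$, so that the three-term relation becomes a polynomial identity in two base points. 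A barycentric change of variables then matches the Möbius triple $(z,\,z/(z-1),\,1/(1-z))$ with the affine triple $(x,y,z)$, and the cyclic bracket $[s,t;x,y]_{n}^{B}$ of Theorem 2.1 appears coefficient by coefficient; the combinatorial identity $r\binom{s}{k}\binom{t}{n-k}+s\binom{t}{k}\binom{r}{n-k}+t\binom{r}{k}\binom{s}{n-k}=0$ used there is the monomial shadow of $(ST)^{3}=-I$.

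The hard part will be bookkeeping the normalizations. The Eichler integral for $E_{k}$ has cuspidal and boundary contributions that must either vanish or be absorbed into Bernoulli coefficients without spoiling the cyclic symmetry; this requires the even-weight specialization and a clean regularization in the spirit of Kohnen--Zagier, together with the standard convention on $(2\pi i)$-factors. A secondary obstacle is the matching step: verifying that the univariate three-term relation, after homogenization, coincides binomial-for-binomial with the bivariate Sun--Pan identity requires careful tracking of the reflection sign $\varepsilon_{n}=(-1)^{n}$ for Bernoulli polynomials and of the normalization constant in front of $r_{E_{k}}$. Once these accounting issues are resolved, the corollary follows directly from Theorem 3.1 applied to $f=E_{k}$.
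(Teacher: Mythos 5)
The paper states this corollary without any proof at all---it is offered as a structural remark, with the surrounding text only gesturing at Eichler--Manin theory---so you are filling in a blank rather than reproducing an argument. Your first half is essentially the standard computation: splitting the Eichler integral of $E_k$ into the constant-term piece and the tail does produce the classical period polynomial whose coefficients are products $B_aB_b/(a!\,b!)$ with $a+b=k$, together with an extra $\zeta(k-1)$ term. One correction there: that extra term is not ``cuspidal'' (Eisenstein series have no cuspidal part) and it does not drop out by a parity accident in the three-term relation; it survives, but it is proportional to $z^{k-2}-1$, which is a coboundary and therefore satisfies the three-term relation on its own, so the Bernoulli part can be isolated. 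That bookkeeping is fixable.

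The genuine gap is in the second half. The three-term relation for the single polynomial $r_{E_k}(z)$, compared coefficient by coefficient in $z$, yields a two-index family of identities among the Bernoulli \emph{numbers} $B_aB_b$ (the restricted period-polynomial relations in the sense of Gangl--Kaneko--Zagier). The Sun--Pan identity of Theorem~2.1 is strictly richer: it carries the free partition $r+s+t=n$ \emph{and} the free arguments $x+y+z=1$ of Bernoulli \emph{polynomials}. Your proposed bridge---a two-variable Eichler cocycle built from $(\tau-\alpha)^s(\tau-\beta)^t$ followed by a ``barycentric change of variables''---is precisely where the content of the corollary lives, and it is asserted rather than constructed. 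In particular, $(ST)^3=-I$ acts only on the variable $z$ through the weight-$(k-2)$ slash action; it does not by itself cyclically permute a pair of base points $(\alpha,\beta)$ or impose the affine constraint $x+y+z=1$. To make that step work you would need a genuinely larger object (an Eisenstein cocycle for $\mathrm{SL}_2(\mathbb{Z})\ltimes\mathbb{Z}^2$, or the two-variable Bernoulli generating function as a Jacobi-type cocycle) and a proof that \emph{its} three-term relation specializes monomial-for-monomial to the bracket $[s,t;x,y]_n^B$. Until that object is exhibited and its cocycle relation established, the corollary's claim that the modular relation ``yields'' the full Sun--Pan identity remains unproved; what your argument actually delivers is the Bernoulli-number specialization.
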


\subsection*{Analytic Bernoulli viewpoint}
The analytic Bernoulli functions
\[
B(s;x)=-s\,\zeta(1-s,x),\qquad
A(s;x)=-\frac{s!}{(2\pi)^s}2\,\Im\!\left(e^{-\pi i s/2}\Li_s(e^{2\pi i x})\right)
\]
satisfy Appell relations. Here $\Li_s$ denotes the classical polylogarithm,
cf.~Lewin’s monograph~\cite{Lewin1981}.
Thus the modular three-term relation naturally extends to these functions,
connecting their cyclic identities to modular symmetry.

The analytic Bernoulli functions introduced in \cite{Nagai2025b}
fit naturally into this modular symmetry, extending the Sun--Pan
identities to the analytic setting.

\noindent\textbf{Remarks.}
\begin{itemize}
\item Eichler--Manin theory links period polynomials with modular symbols, and was further developed by Manin~\cite{Manin1972}.
\item For cusp forms, period polynomials encode critical $L$-values.
\item The triple $\{z,z/(z-1),1/(1-z)\}$ permutes $\{0,1,\infty\}$, echoing
the cyclic constraint $x+y+z=1$.
\end{itemize}

\noindent\textbf{Remarks.}
\begin{itemize}
\item \emph{Eichler--Manin theory.}  
In the 1950s and 1960s, Eichler and Manin developed the theory of period
polynomials and modular symbols, showing that these encode the critical values
of $L$-functions of modular forms. The three-term relation used here is a
direct reflection of the modular relation $(ST)^3=-I$ in this framework.

\item \emph{Cusp forms versus Eisenstein.}  
For cusp forms, period polynomials do not reduce to Bernoulli numbers but
capture genuinely deep arithmetic information: their coefficients are related
to critical $L$-values, and conjecturally to motivic periods. In contrast,
for Eisenstein series the coefficients collapse to Bernoulli numbers, which
explains why the Sun--Pan cyclic identities emerge in such an explicit
elementary form.

\begin{sloppypar}
\item \emph{Geometric interpretation.} The permutation of the triple $\{0,1,\infty\}$
induced by $(ST)^3=-I$ mirrors the constraint $x+y+z=1$ in cyclic identities.This connects the
identities to the geometry of the moduli space $M_{0,4}$ and to the broader
framework of arithmetic geometry of $\mathbb{P}^1\setminus\{0,1,\infty\}$.
\end{sloppypar}

\item \emph{Motivic outlook.}  
From this perspective, cyclic identities can be seen as shadows of deeper
symmetries in the category of mixed Tate motives. This resonates with the
philosophy that relations among polylogarithmic values, such as those
embodied in analytic Bernoulli functions $A(s;x), B(s;x)$, ultimately reflect
motivic Galois symmetries.
\end{itemize}

%================================================

\section{Concluding Remarks}

The cyclic vanishing identities admit two complementary interpretations:
(1) minimal Appell axioms explain their analytic universality;
(2) modular invariance shows their arithmetic origin.  
Together these perspectives place the identities at the crossroads of
analysis, combinatorics, and arithmetic geometry.

\noindent\textbf{Remarks.}
\begin{itemize}
\item Bridge between Appell structures and modular forms.
\item Resonance with the Zagier program\cite{Zagier1991} and motivic Galois symmetries.
\item Outlook: extensions to multiple polylogarithms, $q$-zeta, elliptic analogues.
\end{itemize}

\subsection*{Further remarks: period viewpoint}

In the sense of Kontsevich--Zagier\cite{KontsevichZagier2001}, the coefficients appearing in cyclic
identities are periods: Bernoulli numbers via $\zeta(2m)$, polylogarithmic
values via $A(s;x)$, and integrals over semi-algebraic domains. Thus the
vanishing relations may be viewed as explicit examples of period relations.

This resonates with the philosophy of the Zagier program\cite{Zagier1991}: relations among
multiple zeta values and polylogarithms reflect motivic Galois symmetries.
Cyclic vanishings provide a concrete family of such relations, arising
simultaneously from Appell axioms and from modular invariance.

In particular, the geometry of $\mathbb{P}^1\setminus\{0,1,\infty\}$ suggests
that cyclic identities form part of the web of period relations in mixed Tate
motives over the integers, with the modular group action $(ST)^3$ as the
visible symmetry.
%================================================

\section*{Acknowledgement}

The author gratefully acknowledges the invaluable assistance of \emph{fuga}
in the preparation of this work. This note is part of the ongoing
\emph{hoge \& fuga} series.

\end{document}